\DeclareMathOperator*{\argmin}{arg\,min}
\def\blambda{\boldsymbol{\lambda}}
\def\bd{\boldsymbol{d}}
\newtheorem{theorem}{Theorem}[section]
\title{Ensemble-Based Estimation of Alzheimer’s Disease Incidence from Dynamic Population Reconstructions 
}
\author{
  Giulia Bertaglia, Elisa Iacomini \\
  Department of Environmental and Prevention Sciences, \\
  University of Ferrara, \\
  Ferrara, Italy\\
  \texttt{\{giulia.bertaglia, elisa.iacomini\}@unife.it} \\
   \And
  Alex Viguerie \\
  Department of Pure and Applied Sciences
  Universit\`a degli Studi di Urbino Carlo Bo \\
   Urbino, PU (Italy)\\
  \texttt{alexander.viguerie@uniurb.it} \\
}
\begin{document}
\maketitle

\begin{abstract}
We present a two-stage methodology for reconstructing Alzheimer's disease (AD) incidence over time using ensemble Kalman inversion (EKI) applied to mortality data. In the first stage, we use EKI to infer temporal trends in all-cause and Alzheimer’s-specific mortality by fitting an age-structured demographic model to observed death counts. This yields posterior estimates of evolving population structure and age-specific AD mortality rates. In the second stage, we apply a back-calculation procedure that uses these estimates, along with the hazard of AD-related death following disease onset, to infer time- and age-specific incidence rates. This reverse-inference framework enables the reconstruction of latent disease dynamics in the absence of direct incidence surveillance. By integrating demographic structure, disease-specific hazards, and observed mortality into a coherent inferential pipeline, our approach offers a principled and flexible tool for monitoring chronic disease trends and estimating historical disease burden.

\end{abstract}

\keywords{Alzheimer`s disease \and inverse problems \and backcalculation \and mortality data \and age-structured modeling }

\section{Introduction}
Alzheimer’s disease (AD) represents one of the most pressing public health challenges of the 21st century. With aging populations across the globe, the burden of dementia-related mortality and disability is projected to rise dramatically over the coming decades, straining both formal healthcare systems and informal caregiving networks \cite{cimler2019predictions, wolters2020twenty, zissimopoulos2018impact}. Accurate, timely estimation of Alzheimer’s incidence across age and time is essential for forecasting disease burden, evaluating the effectiveness of preventive strategies, and guiding resource allocation at the national and subnational levels. Yet, despite its importance, direct estimation of AD incidence remains extremely difficult in practice \cite{brookmeyer2011national, tahami2022alzheimer, gustavsson2023global, alzheimer2025}.

Several factors complicate the direct measurement of AD incidence. These include delayed diagnosis, limited screening, variability in clinical definitions over time, and the absence of centralized or universal case registries in many regions \cite{gustavsson2023global, tahami2022alzheimer, monfared2024prevalence}. Longitudinal cohort studies can yield high-quality incidence estimates, but are expensive, slow, and subject to cohort effects that limit generalization \cite{joling2020time}. In contrast, AD-related mortality data—while a lagged and indirect signal—are often available with high temporal and demographic resolution in high-income countries, thanks to vital registration systems \cite{cdc_wonder,brown2024trends, romero2014under,stokes2020estimates}. This disparity motivates the use of back-calculation methods, in which current or recent mortality patterns are used to infer past incidence rates based on knowledge of disease progression and delay distributions \cite{egan2015review}.

The basic idea of back-calculation is well-established in epidemiology, having been originally developed and applied extensively to HIV/AIDS through the pioneering work of R. Brookmeyer and others \cite{brookmeyer1989statistical, brookmeyer1991reconstruction, song2017using, viguerie2023isolating, viguerie2024covid}, various cancers \cite{nautiyal2018spatiotemporal, ventura2014estimating, mariotto2017estimation}, and other chronic or progressive conditions where direct case ascertainment is difficult \cite{de2004estimating,trubnikov2014estimated}. In the AD context, back-calculation methods have been applied to cohort data \cite{yu2011estimating}; however, large-scale estimates of incidence and prevalence have tended to rely on model-based estimates or statistical extrapolation \cite{rajan2021population, brookmeyer1991reconstruction, rommel2025dementia,raggi2022incidence, monfared2024prevalence}.

In the present work, we propose novel back-calculation framework for reconstructing age- and time-specific incidence of Alzheimer’s disease in the United States from mortality data and dynamic reconstructions of the at-risk population. We model mortality as a convolution of past incidence, the age structure of the at-risk population, and the distribution of time from disease onset to death. The goal is to invert this mapping to recover the latent incidence function, structured by both age and calendar time. Our approach integrates three data sources:
\begin{itemize}
    \item Age-structured population distributions over time, which evolve under birth, death, and cohort aging dynamics, and are modeled via a demographic simulator based on a filtered partial differential equation (PDE)  using an Ensemble Kalman inversion (EKI) scheme;
\item AD-specific mortality counts, resolved by age and year, drawn from the Centers for Disease Control and Prevention (CDC) WONDER online database \cite{cdc_wonder};
\item A time-to-death distribution, describing the probabilistic lag between AD onset and death, calibrated from published cohort studies and approximated via a Weibull distribution with empirically supported shape and scale parameters \cite{joling2020time}.
\end{itemize}

Unlike classical back-calculation approaches that estimate incidence from deaths using stationary or time-series smoothing methods, our framework integrates population dynamics and mortality via an age-structured PDE, and formulates the incidence estimation problem as a structured inverse problem for a convolutional operator with time-varying demographic weights. Discretizing this equation over age and time yields a structured linear inverse problem, with the forward matrix encoding both the demographic dynamics and the convolution kernel implied by the onset-to-death distribution. Solving this inverse problem requires regularization, as the operator is compact and the data are noisy. We adopt Tikhonov regularization \cite{tikhonov} with a second-derivative (Laplacian) penalty over the age domain, which encourages smoothness in the inferred hazard functions while preserving temporal variation. This allows for stable, interpretable estimates of the age-specific incidence surface.

Furthermore, while classical back-calculation often relies on point estimates of the population at risk, we explicitly account for demographic uncertainty by propagating ensemble-based estimates of the susceptible population obtained from a dynamic PDE model. This coupling enables us to embed the epidemiological inverse problem within a broader Bayesian filtering framework, where uncertainty in demographic projections informs the uncertainty of reconstructed incidence. The core inverse solver remains deterministic, but operates on a posterior-informed input distribution, thereby blending stochastic population dynamics with a principled mechanistic structure for incidence-to-deaths mapping.

We demonstrate the application of the proposed methodology to reconstruct Alzheimer’s incidence in a realistic demographic setting, using data from \cite{cdc_wonder} over the period 2000-23. Our findings show that the method produces plausible, smooth incidence curves that are consistent with observed mortality trends and known features of Alzheimer’s natural history. Moreover, we provide an explicit quantification of age- and time-specific hazard rates, as well as annualized incidence estimates suitable for public health reporting.

This work aims to enhance the development of mathematically rigorous methods for the indirect inference of chronic disease dynamics, and highlights the value of combining epidemiological modeling, statistical inverse theory, and modern filtering techniques. To our knowledge, this is the first attempt to reconstruct time- and age-specific Alzheimer’s incidence from mortality using a fully data-informed convolutional backprojection method grounded in dynamical demography.

The remainder of the manuscript is organized as follows. In Section \ref{sec:model}, we present the demographic model along with the population reconstruction method based on the EKI. Section \ref{sec:incidence} describes the estimation of the incidence of AD. In Section \ref{sec:results}, we present and discuss the numerical application to U.S. data and the results obtained. Final conclusions are given in Section \ref{sec:conclusions}.

\section{Demographic Model and Population Reconstruction}\label{sec:model}

Let $u(a,t)>0$ be the number of persons aged $a$ at time $t$, $\mu(a,t)>0$ the mortality hazard for persons age $a$ at time $t$, $\alpha(t)>0$ the number of new births at time $t$, $u_0(a)$ the initial population age distribution, and $\xi(a,t)>0$ the number of immigrants aged $a$ at time $t$. Note that immigration must be explicitly accounted for in population modeling due to its substantial and lasting demographic impact. In particular, the magnitude of immigrant inflows is often of the same order as natural demographic changes and therefore cannot be considered negligible. For example, individuals who immigrated to the United States in their 20s or 30s around 2006 become part of the 40–50 age cohort by 2018, significantly influencing the structure of the population age distribution.

We consider an ensemble of populations aged $a$ at time $t$, $u_i(a,t)$, where $i=1,\,2,\,...,\,N_e$, $N_e$ number of ensembles, each evolving according to the following partial differential equation:

\begin{align}\begin{split}\label{demographicModel}
\dfrac{\partial u_i}{\partial t} + \dfrac{\partial u_i}{\partial a} &= -\mu_i(a,t)u_i + \xi_i(a,t), \quad t > t_0,\ a > 0, \\
u_i(0,t) &= \alpha(t), \\
u_i(a,t_0) &= u_{i,0}(a),
\end{split}\end{align}
where $t_0$ identifies the initial time.

We estimate the time-varying mortality hazard function $\mu_i(a,t)$ from observed, age-binned death counts using a Bayesian data assimilation technique known as the \textit{Ensemble Kalman inversion} (EKI). This approach treats $\mu(a,t)$ as a latent field and updates an ensemble of candidate solutions by assimilating observed age-structured death data through a Kalman-type correction. 

Let the discretized form of $\mu_i(a,t)$ over an age-time mesh be represented as a vector $\boldsymbol{\mu}_i \in \mathbb{R}^{n}$, and let $\mathbf{y}^{\text{obs}} \in \mathbb{R}^{m}$ be the vector of observed mortality counts across age bins and time points. The population model \eqref{demographicModel} defines a forward operator $\mathcal{H}(\boldsymbol{\mu}_i)$ which maps each candidate $\boldsymbol{\mu}_i$ to predicted observations (i.e., death counts) via numerical integration of the PDE system.

Each ensemble member’s initial population distribution, denoted by $u_{i,0}(a)$, is generated using age-binned United States (U.S.) Census population data for the year 2000. Specifically, we enforce that the total population within each discrete age bin (e.g., 5-year intervals) matches the observed data exactly across all ensemble members. However, within each bin, the age distribution is sampled uniformly at random for each ensemble member. This results in an ensemble of population trajectories that are all consistent with observed age-binned population totals, while allowing uncertainty in the finer intra-bin structure. In this way, the ensemble maintains statistical similarity to the empirical population while enabling uncertainty propagation throughout the modeling pipeline.

Annual births $\alpha(t)$ are defined using publicly available natality data \cite{cdc_wonder} and are the same for each ensemble member.  For the immigration term $\xi(a,t)$, we fit Weibull distributions for each year to match the data for the number and ages of US immigrations given in \cite{camarota_cis_2021}. For each ensemble member $u_i$, we sample the $\gamma_i(a,t)$ independently, to ensure proper propagation of the uncertain data in the ensembling. 

\subsection{EKI Algorithm for Mortality Estimation}

The Ensemble Kalman inversion is a powerful derivative-free method for solving inverse problems, particularly when data are noisy. By iteratively updating an ensemble of estimates, EKI effectively incorporates observational data to refine solutions without requiring gradient information. This makes it especially attractive for problems involving complex or computationally expensive forward models. EKI has recently gained significant attention due to its robustness in handling uncertainty and its adaptability to a wide range of applications, as highlighted in recent developments and theoretical insights such as \cite{herty2019kinetic,herty13recent,schillings2017analysis, herty2023filtering,Li2020} and references therein.
\par A first application of this approach to the reconstruction of mortality rates and disease incidence from limited and noisy data has been recently explored in \cite{viguerie2025aging}, demonstrating the potential of EKI in epidemiological modeling and public health contexts.

\par To define the age- and time-dependent mortality rates $\mu$, the EKI proceeds iteratively over time steps, updating each ensemble member via the following procedure:
\begin{algorithm}[H]
\caption{EKI for Mortality Estimation}
\begin{algorithmic}[1]
\State \textbf{Initialize} ensemble $\{ \boldsymbol{\mu}_i^{(0)} \}_{i=1}^{N_e}$ from a prior distribution
\For{each time step $t_j$}
    \For{each ensemble member $i = 1,\dots,N_e$}
        \State Simulate $u_i^{(j)}$ using $\mu_i^{(j-1)}$ in Eq.~\eqref{demographicModel}
        \State Compute predicted deaths: $\mathbf{y}_i^{(j)} = \mathcal{H}(\mu_i^{(j-1)})$
    \EndFor
    \State Compute sample mean $\bar{\mu}^{(j)}$, covariance $C_{\mu y}^{(j)}$, and $C_{yy}^{(j)}$
    \State Kalman gain: $K^{(j)} = C_{\mu y}^{(j)} \left(C_{yy}^{(j)} + R^{(j)}\right)^{-1}$
    \For{each ensemble member $i = 1,\dots,N_e$}
        \State Update:
        \[
        \mu_i^{(j)} = \mu_i^{(j-1)} + K^{(j)} \left( \mathbf{y}^{\text{obs},(j)} + \epsilon_i^{(j)} - \mathbf{y}_i^{(j)} \right)
        \]
        where $\epsilon_i^{(j)} \sim \mathcal{N}(0, R^{(j)})$
    \EndFor
\EndFor
\State \textbf{Return} posterior ensemble $\{ \mu_i^{(J)} \}_{i=1}^{N_e}$
\end{algorithmic}
\end{algorithm}

\par This procedure balances the prior dynamics imposed by the PDE system with observed data, namely, annual death counts in year $j$ grouped by discrete age-bins $\mathbf{y}^{\text{obs},j}$,  producing a temporally coherent, data-consistent reconstruction of $\mu(a,t)$. The ensemble structure allows for nonparametric uncertainty quantification and accommodates spatial smoothing across age in a natural manner.

\section{Alzheimer's incidence function}\label{sec:incidence}

After reconstructing the population ensemble through the previously-outlined procedure, we can use the members $u_i(a,t)$ to estimate the incidence of Alzheimer's disease in the population, $\lambda(a,t)$. For ease of explanation, we present the reconstruction process for a single member of the ensemble $u_i$; repeating this process for each $u_i$, $i=1,\,2,\,...,\,N_e$, allows us to obtain age-time distributions of the annual incidence of AD which naturally incorporate uncertainty, following a classical Monte Carlo approach \cite{caflisch1998}.

\par Let $\gamma(t)$ be a known probability density function that represents the time distribution from the onset of Alzheimer's to death. In the present, we use the definition
\begin{equation}\label{gammaDist}
\gamma(\tau) = \frac{k}{\vartheta} \left( \frac{\tau}{\vartheta} \right)^{k-1} e^{-(\tau/\vartheta)^k},
\end{equation}
with scale parameter $\vartheta\sim \mathcal{N}(6.3841,0.411) $ years and shape parameter $k\sim \mathcal{N}(1.4769, 0.0068)$. Note that $k>1$ in general, reflecting the increase in the probability of mortality over time. We independently sample $\vartheta_i$ and $k_i$ from for each ensemble element to account for uncertainty. Note that the parameter distributions were obtained through a nonlinear least-squares fitting following the survival data post-onset given in \cite{joling2020time}.

\par The number of Alzheimer’s deaths $d(a,t)$ at age $a$ and time $t$ is given by:
\begin{align}\label{deathKernel}
d(a,t) = \int_0^t \gamma(\tau)\, u(a-\tau, t-\tau)\, \lambda(a-\tau, t-\tau)\, d\tau.
\end{align}

We discretize the age domain into $K$ non-overlapping intervals:
\[
I_k = [a_k, a_{k+1}), \quad I = \bigcup_{k=0}^{K-1} I_k.
\]
Then, let $\{t_j\}_{j=0}^{J}$ be a sequence of time points. For each age bin $I_k$ and time $t_j$, define the aggregated deaths:
\begin{align}\label{discreteA}
d_{kj} = \int_{a_k}^{a_{k+1}} \int_0^{t_j} \gamma(\tau)\, u(a - \tau, t_j - \tau)\, \lambda(a - \tau, t_j - \tau)\, d\tau\, da.
\end{align}

We further approximate $\lambda(a,t)$ using a piecewise constant basis:
\[
\lambda(a,t) \approx \sum_{r=0}^{K'-1} \sum_{s=0}^{J'-1} \lambda_{rs}\, \phi_{rs}(a,t),
\]
where:
\[
\phi_{rs}(a,t) = \begin{cases}
1 & \text{if } a \in [a_r, a_{r+1}),\ t \in [t_s, t_{s+1}) \\
0 & \text{otherwise}.
\end{cases}
\]
Note that the intervals defined by $K'$ and $J'$ are, in general, different than those of the age-binned mortality data (provided by \cite{cdc_wonder}), as they are defined based on the discretization of the demographic model \eqref{demographicModel}. Substituting this into \eqref{discreteA} yields:
\begin{align}\label{discreteB}
d_{kj} = \sum_{r=0}^{K-1} \sum_{s=0}^{J-1} \lambda_{rs} \int_{a_k}^{a_{k+1}} \int_0^{t_j} \gamma(\tau)\, u(a - \tau, t_j - \tau)\, \phi_{rs}(a - \tau, t_j - \tau)\, d\tau\, da.
\end{align}

Let us define
\begin{align}
A_{kj, rs} := \int_{a_k}^{a_{k+1}} \int_0^{t_j} \gamma(\tau)\, u(a - \tau, t_j - \tau)\, \phi_{rs}(a - \tau, t_j - \tau)\, d\tau\, da,
\end{align}
so that the discretized system becomes:
\begin{align}\label{linearSys}
A \blambda = \bd,
\end{align}
where $\bd \in \mathbb{R}^{K\cdot J}$ is the vector of observed Alzheimer’s deaths and $\blambda \in \mathbb{R}^{K'\cdot J'}$ contains the incidence parameters to be estimated. 

\par Note that the matrix $A$ is generally \emph{not square}. The number of observed death counts $d_{kj}$ is determined by the resolution of available mortality data—typically aggregated over coarser age bins and annual time steps. In contrast, the basis used to approximate the incidence function $\lambda(a,t)$ can be defined on a much finer mesh, aligned with the age and time resolution of the underlying population structure $u(a,t)$. 

Let $K$ and $J$ denote the number of age and time bins used to report deaths, and let $K'$ and $J'$ denote the number of age and time bins used to define the basis functions $\phi_{rs}(a,t)$. Then the system \eqref{linearSys} has size $A \in \mathbb{R}^{(K \cdot J) \times (K' \cdot J')}$, and is undetermined in general, owing to the difference in age resolution of observed mortality (5-year age groups) and the age resolution of $u(a,t)$ (often finer than a year). 
The structure of \eqref{linearSys}, indeed, is such that, for a period of years $i=1,\,2,...,\,T$ :

\begin{equation}
    A=\begin{bmatrix}
    \boldsymbol{\Gamma}_1 \boldsymbol{U}_1 & 0 & 0 & ... & 0 \\ 
    \boldsymbol{\Gamma}_2 \boldsymbol{U}_1 & \boldsymbol{\Gamma}_1 \boldsymbol{U}_2 & 0 & ... & 0 \\     
    \boldsymbol{\Gamma}_3 \boldsymbol{U}_1 & \boldsymbol{\Gamma}_2 \boldsymbol{U}_2 & \boldsymbol{\Gamma}_1 \boldsymbol{U}_3 & ... & 0 \\     
    \vdots & \vdots & \vdots & \ddots & \vdots \\
    \boldsymbol{\Gamma}_T \boldsymbol{U}_1 & \boldsymbol{\Gamma}_{T-1} \boldsymbol{U}_2 & \boldsymbol{\Gamma}_{T-2} \boldsymbol{U}_3 & ... & \boldsymbol{\Gamma}_{1} \boldsymbol{U}_T \\     
    \end{bmatrix} ,   \blambda = \begin{bmatrix} \lambda_{1} \\ \lambda_2 \\ \lambda_3 \\ \vdots \\ \lambda_T \end{bmatrix}, \, \bd = \begin{bmatrix} d_{1} \\ d_2 \\ d_3 \\ \vdots \\ d_T \end{bmatrix},  \end{equation}
where the $\lambda_i$ correspond to the age-dependent Alzheimer's incidence during the year $i$, the $d_i$ Alzheimer's deaths in each age bracket in year $i$, $\boldsymbol{U}_i$ the population age structure in year $i$, and $\Gamma_i$ the probability of death $i$ years after Alzheimer's onset. As such, we expect smoothness within each $\lambda_i$ (the age domain), but not necessarily temporal smoothness across $\lambda_i$. A naive pseudo-inversion will not respect these conditions in general. We incorporate this prior belief by defining the matrix:
\begin{equation}
    L=\begin{bmatrix}
        2 & -1 & 0 & 0 & ... & 0 & 0 & 0 \\
        -1 & 2 & -1 & 0 & ... & 0 & 0 & 0 \\ 
        0 & -1 & 2 & -1 & ... & 0 & 0 & 0 \\
        \vdots&   \vdots & \vdots & \ddots & \ddots & \ddots & \vdots & \vdots \\        
        0 & 0 & 0 & 0 & ... & -1 & 2 & -1 \\
        0 & 0 & 0 & 0 & ... & 0 & -1 & 2
    \end{bmatrix}, \,\,\, L = C^T C, \,\,\, R = I \otimes C,
    \end{equation}
where the Cholesky factor $C$ is a lower triangular matrix whose existence is guaranteed by the symmetric positive definiteness of $L$. Note that
\begin{align}
    R^T R = (I \otimes C )^T (I \otimes C) = (I^T I ) \otimes (C^T C) =  I \otimes L,
\end{align}
ensuring that $L$ is applied independently to each $\lambda_i$.
We finally recover $\blambda$ by solving the following minimization problem:
\begin{align}\label{origSystem}
    \argmin_{\blambda} \{ \|A\blambda - \bd \|_2^2 + \beta \| R \blambda \|_2^2\}, 
\end{align}
where $\beta>0$ is a regularization parameter. In the current work, we chose $\beta=10^6$, based on an empirical L-curve analysis, shown in Fig. \ref{fig:tikhChoice} \cite{hansen1999curve}.

\begin{figure}[t]
    \centering
    \includegraphics[width=0.85\textwidth]{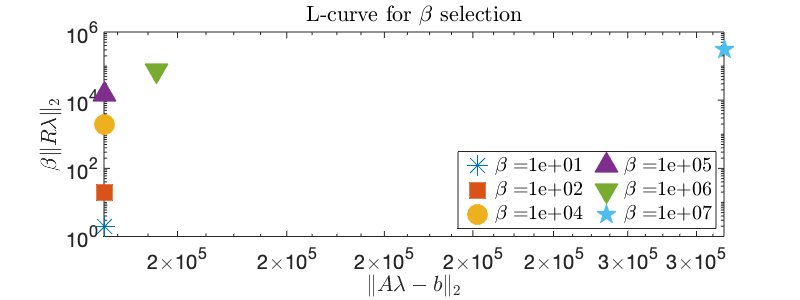}
    \caption{L-curve analysis for the choice of $\beta$. We find that $\beta=10^6$ provides the optimal tradeoff of accuracy and regularization.}
    \label{fig:tikhChoice}
\end{figure}

\begin{theorem}
Let $A\in \mathbb{R}^{m\times n}$, $d\in \mathbb{R}^m$, $R\in \mathbb{R}^{p\times n}$ and  $\beta>0$ the regularization parameter. Then the problem \eqref{origSystem} admits a unique solution:
        \begin{align}\label{uniqueSol}
\blambda= (A^T A + \beta R^T R )^{-1} A^T\bd.
\end{align}
\end{theorem}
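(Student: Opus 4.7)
The plan is to recognize that the objective in \eqref{origSystem} is a strictly convex quadratic in $\blambda$, which implies that a unique global minimizer exists and can be found by solving the first-order optimality conditions (the normal equations). I would first expand
\begin{align*}
F(\blambda) := \|A\blambda - \bd\|_2^2 + \beta\|R\blambda\|_2^2 = \blambda^T(A^TA + \beta R^TR)\blambda - 2\bd^T A\blambda + \bd^T\bd,
\end{align*}
which exhibits $F$ as a quadratic form with Hessian $M := 2(A^TA + \beta R^TR)$. Computing $\nabla F(\blambda) = 2(A^TA + \beta R^TR)\blambda - 2A^T\bd$ and setting it equal to zero yields the candidate \eqref{uniqueSol}, provided $M$ is invertible.

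The key step is to establish that $A^TA + \beta R^TR$ is symmetric positive definite. Symmetry is immediate. For positive definiteness, given any $\blambda \neq 0$, I would write
\begin{align*}
\blambda^T(A^TA + \beta R^TR)\blambda = \|A\blambda\|_2^2 + \beta\|R\blambda\|_2^2 \ge \beta\|R\blambda\|_2^2,
\end{align*}
and then invoke the specific structure from the excerpt: $R = I \otimes C$, where $C$ is the Cholesky factor of the SPD matrix $L$. Since $L \succ 0$ forces $C$ to be invertible, so is $R$ (its determinant is a power of $\det C \neq 0$), and therefore $R\blambda \neq 0$ whenever $\blambda \neq 0$. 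Combined with $\beta > 0$, this yields $\blambda^T(A^TA + \beta R^TR)\blambda > 0$, so $M$ is SPD and hence invertible, giving the closed-form expression \eqref{uniqueSol}.

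Finally, I would argue uniqueness and global optimality. Because $M \succ 0$, the function $F$ is strictly convex and coercive, so it admits a unique minimizer; the first-order condition is both necessary and sufficient, and the solution recovered from the normal equations is that unique minimizer. Alternatively, one can verify optimality directly via the completion of squares identity
\begin{align*}
F(\blambda) = (\blambda - \blambda^*)^T(A^TA + \beta R^TR)(\blambda - \blambda^*) + F(\blambda^*),
\end{align*}
with $\blambda^* = (A^TA + \beta R^TR)^{-1}A^T\bd$, which shows $F(\blambda) \ge F(\blambda^*)$ with equality iff $\blambda = \blambda^*$.

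The main obstacle I anticipate is the fact that the theorem is stated with generic $A$ and $R$ and does not explicitly hypothesize full column rank of either. In full generality, invertibility of $A^TA + \beta R^TR$ would require only $\ker A \cap \ker R = \{0\}$; however, in the setting of the paper the stronger property that $R$ itself is invertible (via the Cholesky factorization of $L \succ 0$) makes the argument immediate, and this is the observation I would lean on to close the proof cleanly.
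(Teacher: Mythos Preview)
Your proposal is correct and follows essentially the same route as the paper: expand the objective as a quadratic, identify the Hessian $2(A^TA+\beta R^TR)$, argue it is positive definite because $R^TR$ is positive definite ``by construction'' (you make this explicit via $R=I\otimes C$ with $C$ the Cholesky factor of $L\succ 0$), and conclude strict convexity and hence uniqueness. You go slightly further than the paper by actually deriving the closed form \eqref{uniqueSol} from the first-order condition and by flagging that the theorem as stated omits a rank hypothesis on $R$ that is supplied only by the specific construction in the paper.
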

\begin{proof}
    The objective function in \eqref{origSystem} is given by
    \begin{equation}
         J(\blambda)=\|A\blambda - \bd \|_2^2 + \beta \| R \blambda \|_2^2, 
    \end{equation}
    and can be rewritten as a quadratic form:
\begin{align}
    J(\blambda)&=(A\blambda-d)^T(A\blambda-d) + \beta(R\blambda)^T(R\blambda)\\
    &=\blambda^T(A^TA+\beta R^TR)\blambda - 2d^TA\blambda+d^Td.
\end{align}
The Hessian of $J(\blambda)$ is given by
\begin{equation*}
    \nabla^2J(\blambda)=2(A^TA-\beta R^TR).
\end{equation*}

By construction, $R^TR$ is positive definite, and, since $\beta>0$, the term $\beta R^TR$ is also positive definite. 
Adding the positive semi-definite matrix $A^TA$ to a positive definite matrix preserves positive definiteness. Therefore, $A^TA+\beta R^TR$ is positive definite, which implies that $\nabla^2J(\blambda)$ is positive definite. 
Hence, $J(\blambda)$ is strictly convex, which guarantees that the minimization problem has a unique global minimizer. 
\end{proof}

Notice that, in practice, the solution \eqref{uniqueSol} is equivalent to the solution of the following augmented least-squares problem:
\begin{align}\label{augLS}
    \argmin_{\blambda} \bigg\| \begin{bmatrix} A \\ \sqrt{\beta} R \end{bmatrix} \blambda - \begin{bmatrix}
        \bd \\ \boldsymbol{0}
    \end{bmatrix} \bigg\|_2^2.
\end{align}

\section{Results}
\label{sec:results}

We present here the primary outputs of the Alzheimer’s disease incidence reconstruction methodology, based on mortality-constrained back-calculation integrated with demographic simulation. These outputs include (i) the annual probability of developing Alzheimer’s disease (hazard), (ii) the total number of new cases per year (incidence), and (iii) the full spatiotemporal surface of Alzheimer’s hazard rates across age and calendar time. Together, they provide a rich, multiscale portrait of disease burden across the United States adult population over the past two decades. 

We highlight that, due to the COVID-19 pandemic, Alzheimer's mortality during the year 2020 was significantly elevated compared to previous and future years; particularly among deaths for which Alzheimer's was listed as a contributing factor \cite{alzheimer2025,cdc_wonder}. Based on these considerations, we applied a correction factor and reduced Alzheimer's deaths in 2020 by 15\% to avoid overestimating incidence, based on the excess of mortality observed in the considered cohort \cite{alzheimer2025}.  

We note that, for each year from 2005 to 2022, we provide full age-specific incidence and hazard curves, along with associated 99\% confidence intervals, in Appendix A. These reconstructions are derived from ensemble back-calculations and reflect uncertainty due to both stochastic sampling and data limitations. While these year-specific curves offer detailed insights, we focus here on summarizing the broader temporal and demographic trends that emerge across the study period. Our primary emphasis is on the evolution of total incidence over time, as well as how incidence has varied across key age strata. This approach allows us to highlight high-level patterns that persist across multiple years and ensemble members, providing a stable foundation for interpretation even where individual-year estimates may be more sensitive to noise or reporting artifacts.

We organize our results into three broad categories, each presented in its own subsection below:
\begin{itemize}
\item \textbf{Aggregate AD incidence} in the United States population from 2005 to 2022;
\item \textbf{Age-related AD risk}, capturing how hazard rates evolve by age across the study period;
\item \textbf{Age-specific AD incidence}, highlighting trends in the absolute burden of disease within distinct age groups.
\end{itemize}

This structure allows us to first characterize the total burden of Alzheimer’s disease over time, then investigate how risk evolves with age, and finally examine how different segments of the population have contributed to the observed shifts in incidence.

\subsection{Overall Alzheimer's Incidence}
In Fig. \ref{fig:incByYearOverall}, we depict annual AD incidence in the United States over the years 2005-22, with the year-by-year incidence estimates and 99\% confidence intervals provided in Table \ref{tab:annualIncidenceCounts}. Over the period 2005-12, we observe fairly steady AD incidence, around 400,000–600,000 cases per year. The years 2013 and 2014 showed notable growth in AD incidence, with more than 1 million new cases estimated in 2014 alone. This effect appears robust to variation in both population structure and delay kernel $\gamma$. However, they may also reflect improvements in diagnostic capacity and/or changes to attribution policy of AD on death certificates \cite{national2023physicians}. We also remark that the confidence intervals around our estimates for these years are considerably larger than in the years preceding and following, reflecting broader uncertainty.
\par Incidence remained stable over the period 2015-20, and, while lower than 2013-14 levels, notably higher compared to pre-2012, averaging around 600,000 new cases per year. The years 2021-22, in contrast, show a decline from 2019-20 levels. Similar trends have been found elsewhere \cite{freedman2024dementia, alzheimer2025}, and may reflect pruning effects related to the COVID-19 pandemic, in which excess mortality among elderly individuals during the pandemic years has led to fewer persons developing AD downstream. We also note, however, that these years are closer to the end of our data period, and are likely more subject to statistical artifacts. As such, the apparent decrease in AD in 2021-22 should be interpreted with caution.

\begin{figure}[ht]
    \centering
    \includegraphics[width=0.85\textwidth]{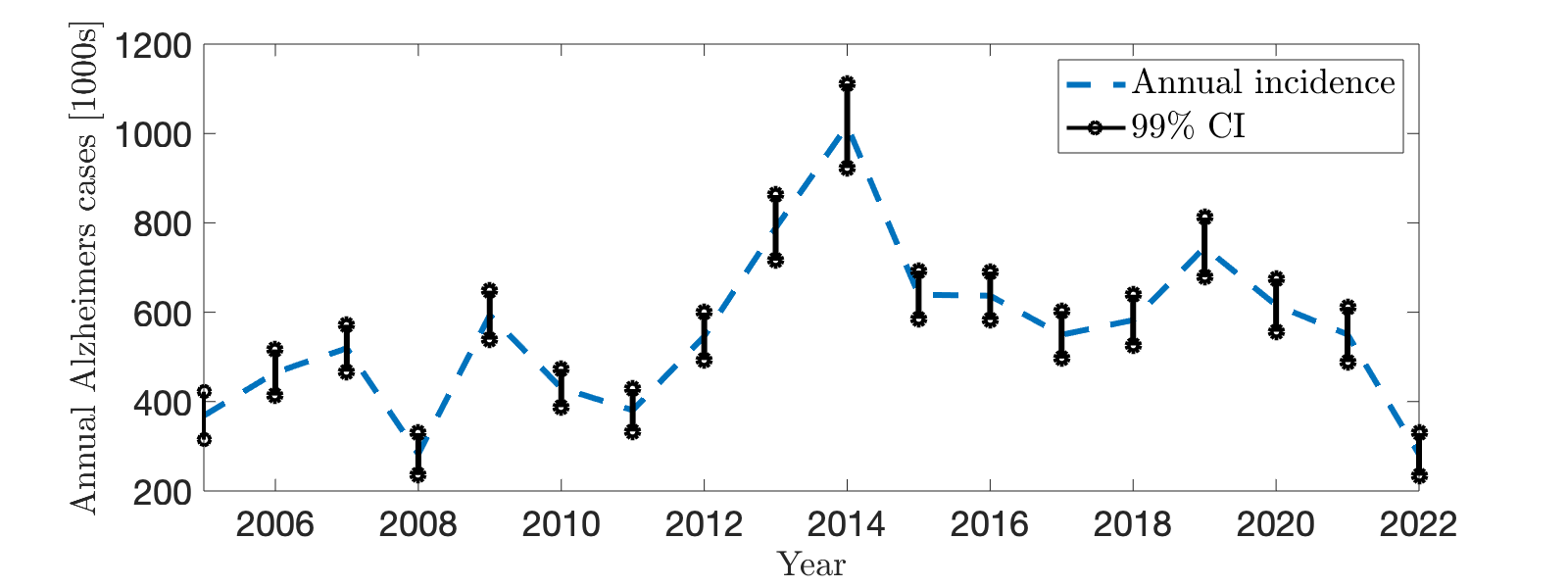}
    \caption{Estimated new Alzheimer's cases, overall, in the United States, 2005-22.}
    \label{fig:incByYearOverall}
\end{figure}

\begin{table}[ht]
\centering
\begin{tabular}{|l||c|c|c|}
\hline
Year & Incidence (1000s) & Lower 99\% CI & Upper 99\% CI \\ \hline \hline
2005 & 368.5 & 314.8 & 422.2 \\ \hline
2006 & 465.0 & 413.5 & 516.6 \\ \hline
2007 & 519.4 & 466.9 & 571.9 \\ \hline
2008 & 282.6 & 235.7 & 329.5 \\ \hline
2009 & 593.4 & 538.7 & 648.1 \\ \hline
2010 & 430.4 & 387.5 & 473.3 \\ \hline
2011 & 380.6 & 333.1 & 428.1 \\ \hline
2012 & 546.2 & 493.1 & 599.2 \\ \hline
2013 & 788.9 & 715.4 & 862.4 \\ \hline
2014 & 1017.3 & 922.6 & 1112.1 \\ \hline
2015 & 639.2 & 585.3 & 693.1 \\ \hline
2016 & 636.9 & 582.8 & 691.1 \\ \hline
2017 & 549.6 & 496.4 & 602.7 \\ \hline
2018 & 582.4 & 525.6 & 639.1 \\ \hline
2019 & 745.1 & 678.4 & 811.9 \\ \hline
2020 & 615.9 & 556.4 & 675.3 \\ \hline
2021 & 550.2 & 488.3 & 612.1 \\ \hline
2022 & 281.9 & 234.1 & 329.6 \\ \hline
\end{tabular}
\caption{Annual Alzheimer's incidence point estimates (in 1000s) and 99\% confidence intervals, in the United States 2005–2022.}
\label{tab:annualIncidenceCounts}
\end{table}

\subsection{Age-Specific Hazard of Alzheimer’s Onset}

Figure~\ref{fig:hazardByYear} shows the annual probability of developing Alzheimer’s disease by age (from age 60 to 90), for selected years. This quantity corresponds to the conditional probability of disease onset within one year for individuals of a given age who have not yet developed AD. A surface plot, showing age, time, and AD risk from ages 60-90 over the years 2005-22 is provided in Figure \ref{fig:hazardByYearSurf}.

Across all displayed years, the probability of AD onset increases exponentially with age from around age 60 through the early 80s, eventually flattening or declining slightly past age 85. The curves exhibit tight clustering across years, indicating consistency and temporal stability of the age-risk relationship. We observe a possible decrease in age-specific risk around the ages 70 to 85, plotted in Figure \ref{fig:hazardByYear70to85}. Later years (2015, 2017, and 2021) show generally lower risk than earlier years (2007, 2009, and 2013). However, we note that this trend is not uniform: 2011 shows a lower risk compared to later years, and 2019 shows a higher risk.  A decrease in age-specific risk for this age cohort, starting in the mid-2010s, has been observed in other studies \cite{freedman2024dementia}.

In Fig. \ref{fig:hazardByYear60to70}, we plot the AD risk hazard for ages 55-70 from 2005-22. While noisy, the reconstructions suggest increased AD risk at younger ages over time. This suggests that increases in AD diagnoses among persons under 70 may also be related to changes in risk, and not solely due to demographic factors. We stress, however, that these trends are non-monotonic and should be interpreted with caution.

The hazard curves are smooth and realistic, with no artifacts of overfitting or instability. Importantly, they respect known biological features of Alzheimer’s: slow buildup of risk before age 70, rapid increase through age 80, and tapering at the highest ages, where survivor bias and diagnostic limitations reduce apparent hazard.

\begin{figure}[t]
    \centering
    \includegraphics[width=0.85\textwidth]{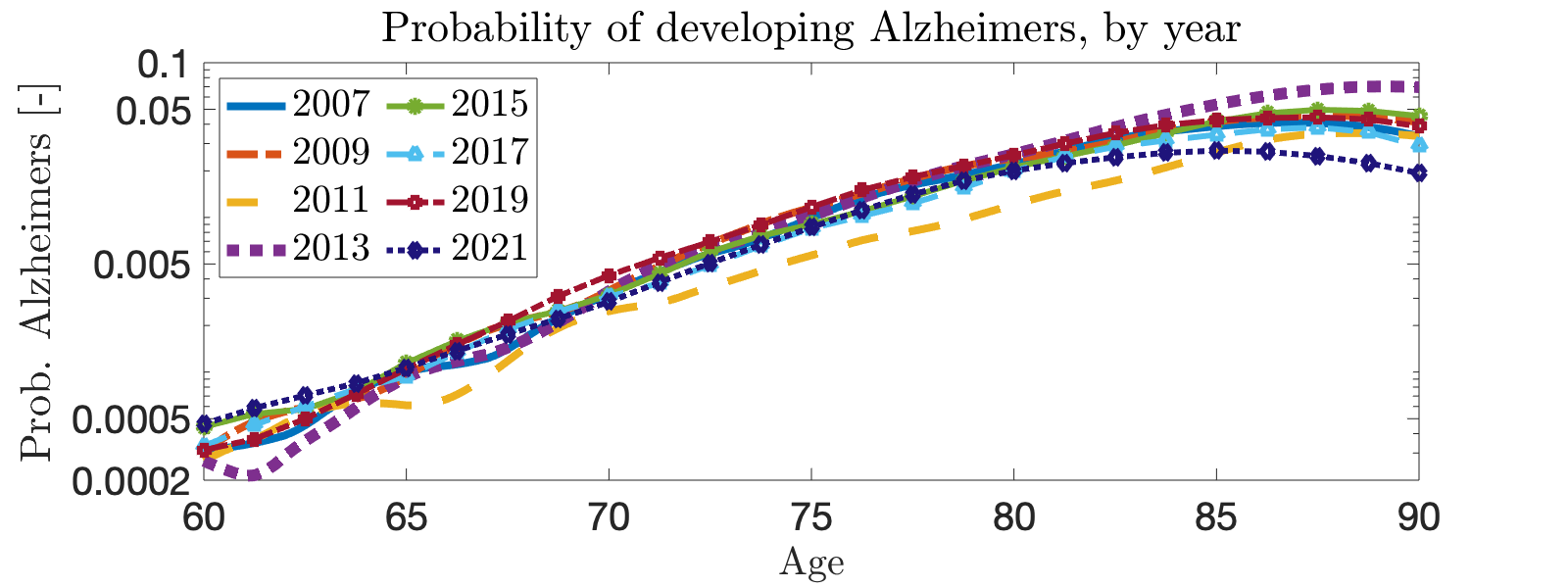}
    \caption{Annual probability of developing Alzheimer’s disease by age, for selected years (2007–2021). Risk increases exponentially with age, stabilizing beyond age 85. We observe relatively stable age-dependent risk in time; however, there is some evidence of increasing risk at younger ages.}
    \label{fig:hazardByYear}
\end{figure}

\begin{figure}[t]
    \centering
    \includegraphics[width=0.85\textwidth]{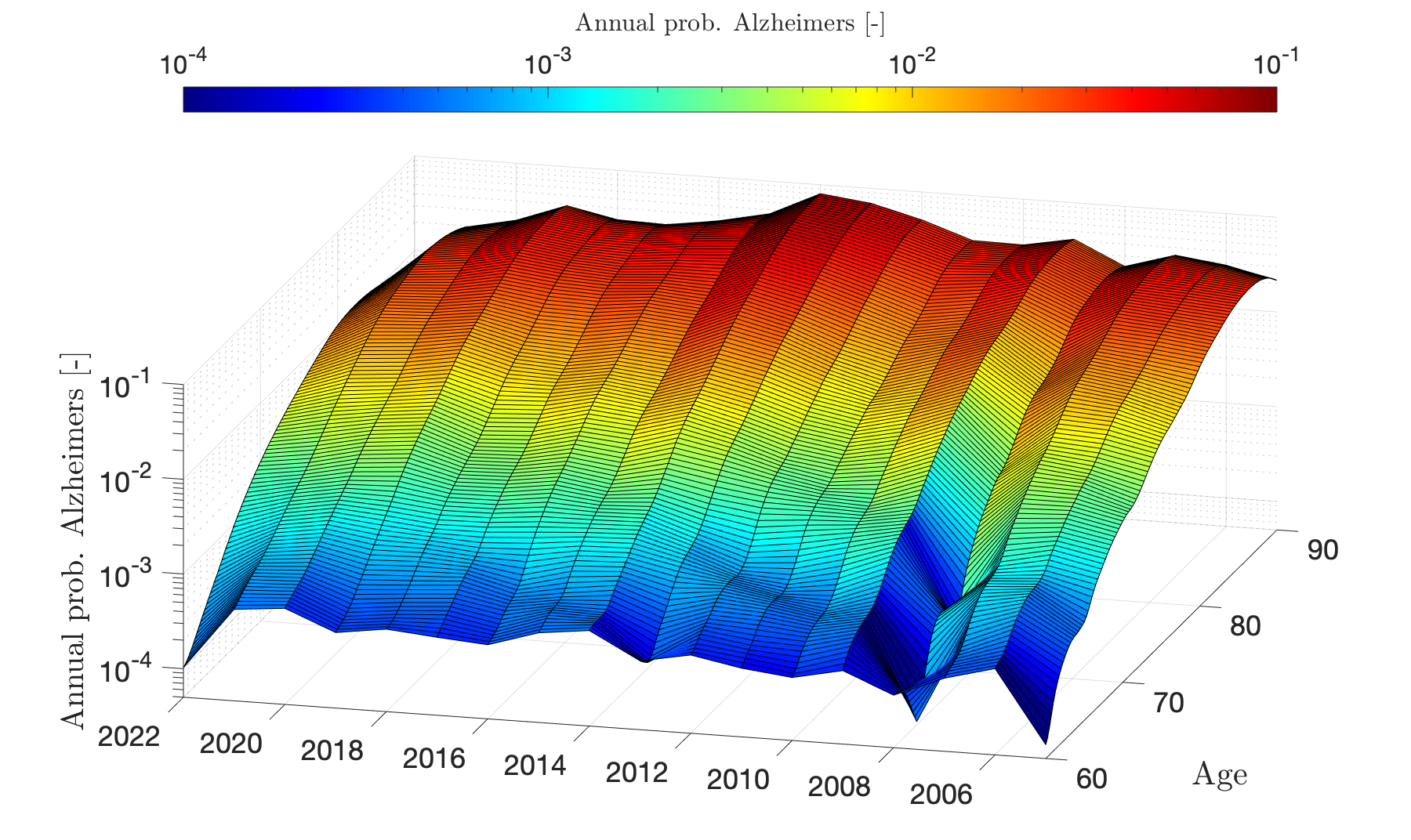}
    \caption{Surface showing Alzheimer's Hazards from 2005-22.}
    \label{fig:hazardByYearSurf}
\end{figure}

\begin{figure}[h]
    \centering
    \includegraphics[width=0.85\textwidth]{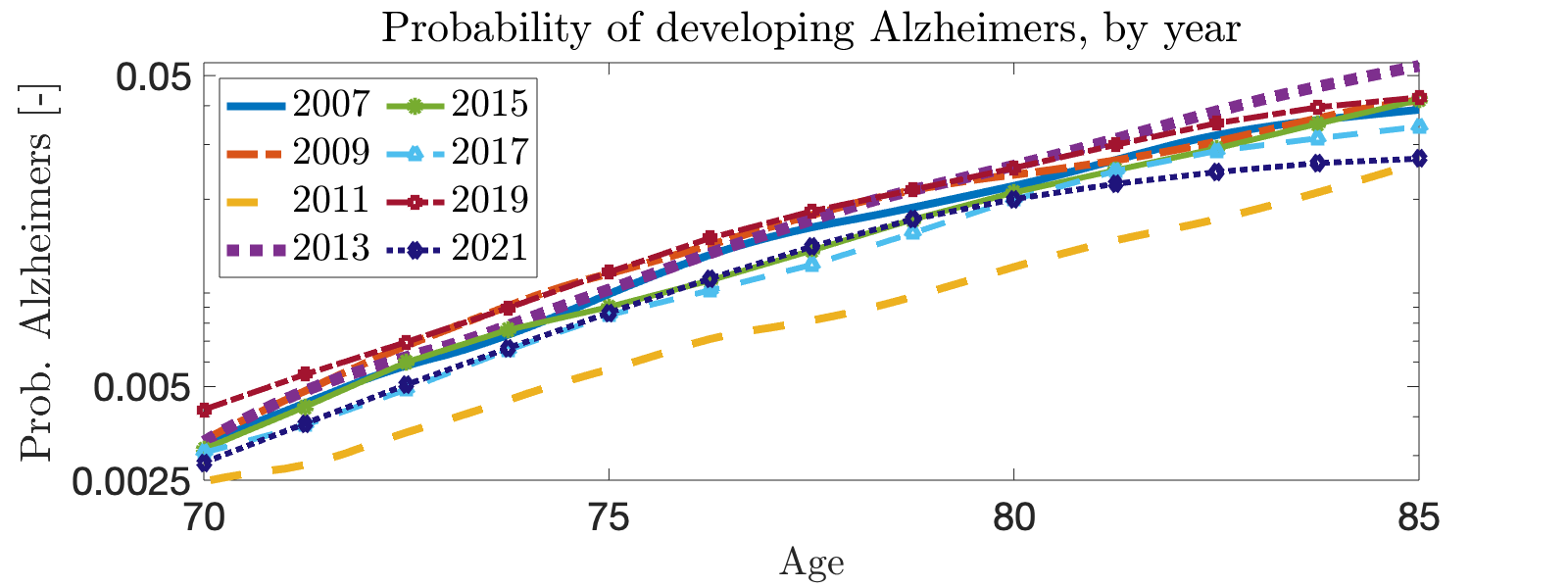}
    \caption{Annual probability of developing Alzheimer’s disease by age, for selected years (2007–2021) for ages 70 to 85. Risk in this age range appears relatively stable in time.}
    \label{fig:hazardByYear70to85}
\end{figure}

\begin{figure}[h]
    \centering
    \includegraphics[width=0.85\textwidth]{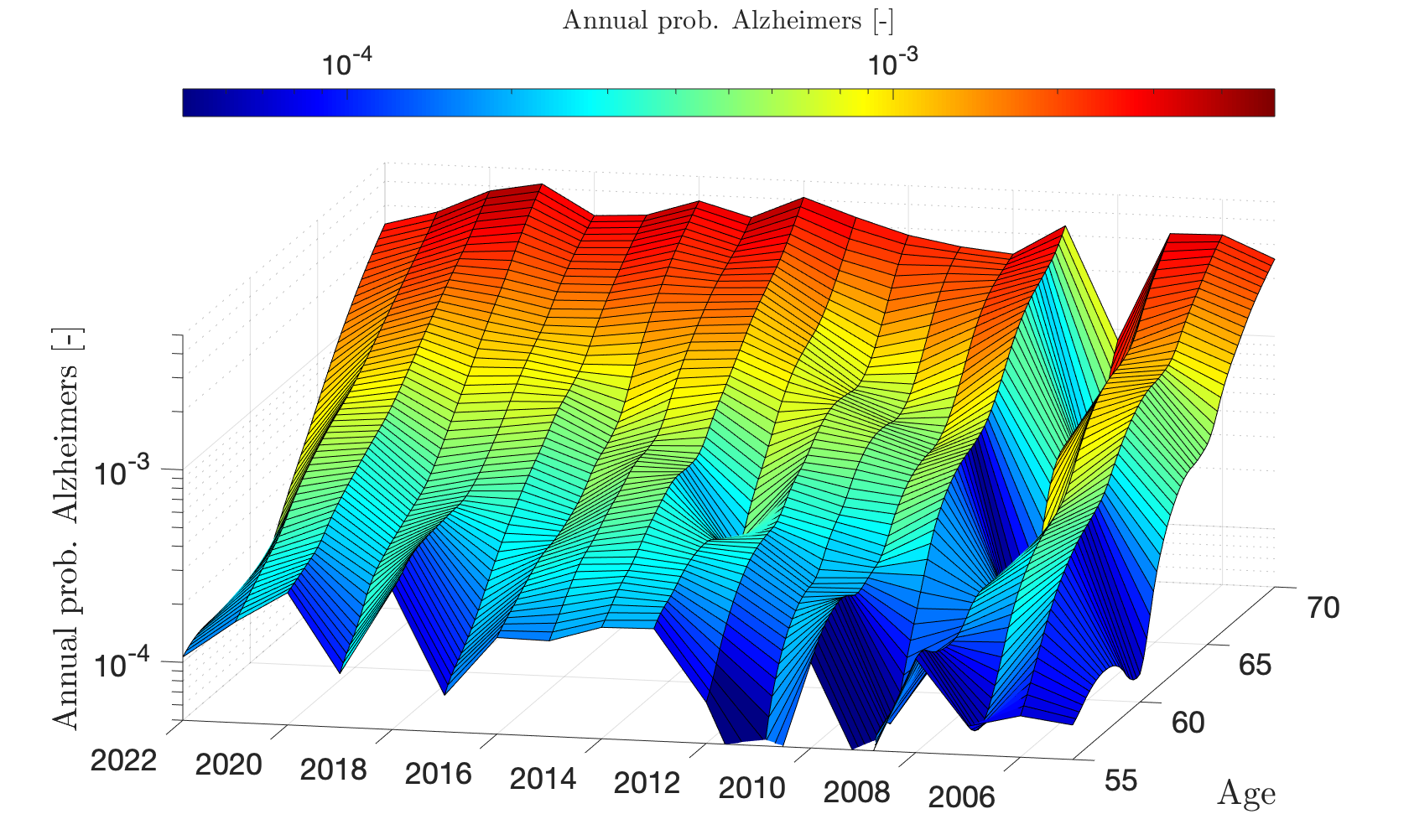}
    \caption{Annual probability of developing Alzheimer’s disease at younger ages (55-70) for selected years, 2005-22. Though volatile, the data suggest that the risk of developing AD in this age range may be increasing over time.}
    \label{fig:hazardByYear60to70}
\end{figure}

\subsection{Incidence: Total New Cases per Year}

Figure~\ref{fig:incByYearAge} shows the corresponding estimates of the total number of new Alzheimer’s cases per year, stratified by age, for the same years as above. This quantity is computed as the product of the hazard rate and the age-specific population size reconstructed via our demographic PDE model \eqref{demographicModel}:
\[
\text{Incidence}(a, t) = u(a, t) \cdot \lambda(a, t).
\]

The incidence curves reveal the joint effect of rising age-specific risk and population size. Incidence increases with age until roughly 82–86 years old, after which it begins to decline—reflecting the reduced population size at the oldest ages despite elevated hazard. The peak of the incidence curve thus emerges from a balance between demographic availability and risk intensity.

\begin{figure}[h!]
    \centering
    \includegraphics[width=0.85\textwidth]{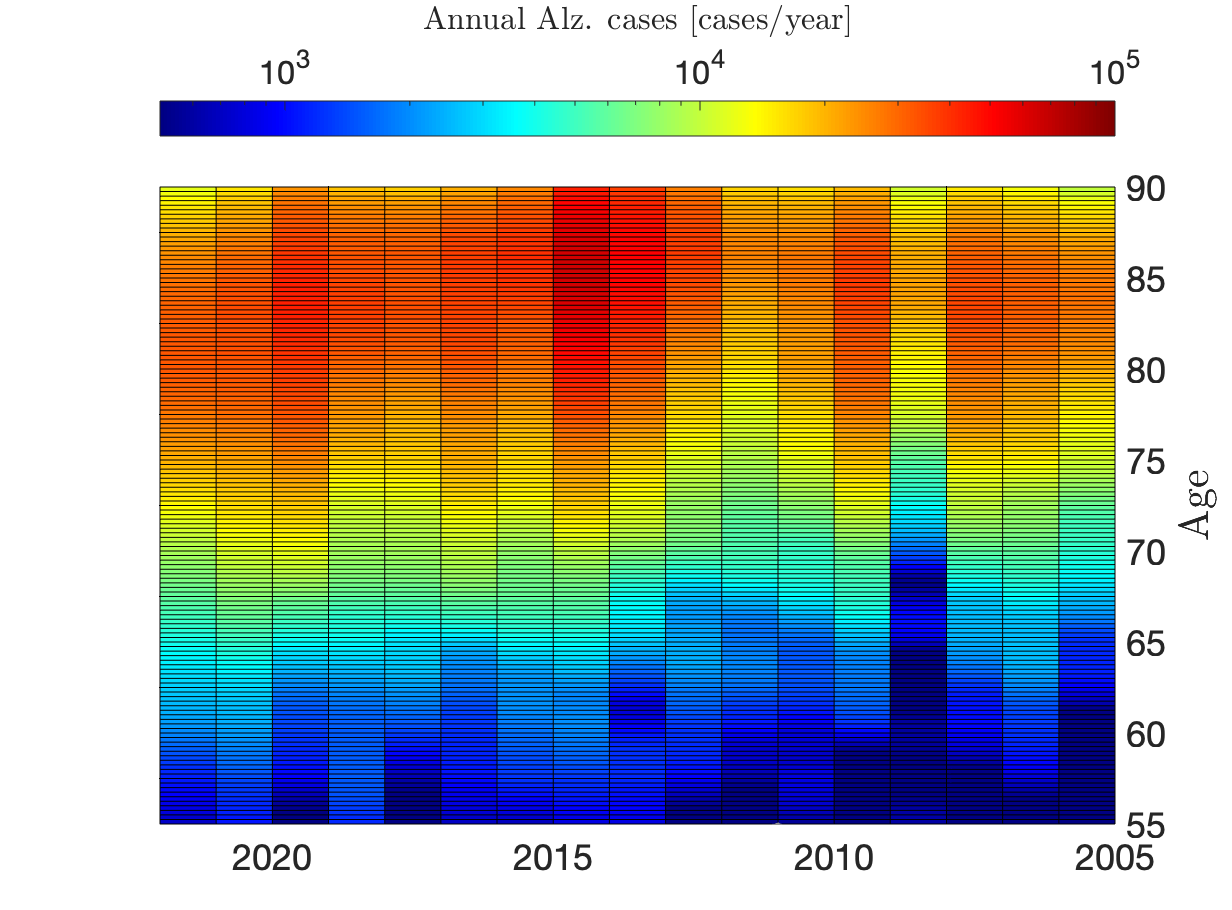}
    \caption{Estimated new Alzheimer’s cases per year by age, for selected years (2005–2022). Incidence peaks between ages 82–86 and declines thereafter due to reduced population at older ages. Later years show clear increases in total case burden at younger ages (75 and under), and decreased burden at older ages (85 and over).}
    \label{fig:incByYearAge}
\end{figure}

\begin{figure}[h!]
    \centering
    \includegraphics[width=0.85\textwidth]{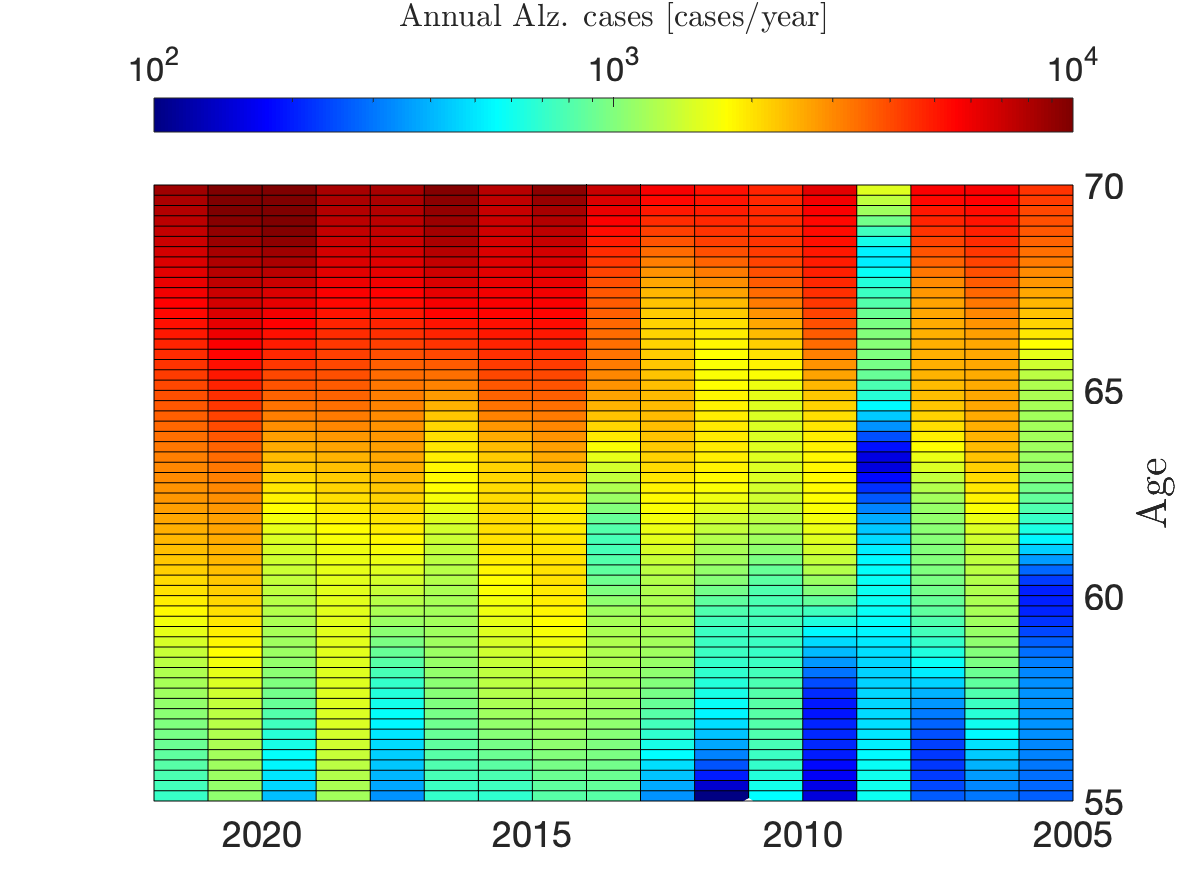}
    \caption{Age-specific AD incidence in the US, ages 55-70, years 2005-22. We observe a clear trend of increased incidence in this cohort over time.}
    \label{fig:incByYearAge_55to70}
\end{figure}

Compared to the hazard curves, inter-year differences are more pronounced. Later years (e.g., 2015, 2018, 2021) show higher total incidence across nearly all age bands, driven by population aging and increased absolute numbers of older adults. This is most clearly visible in the younger age ranges (75 and under). In contrast, later years (2018 and 2021) suggest that AD incidence in the 85 and older cohort may be decreasing somewhat. We plot the age-structured AD incidence across all ages 55-90 and all years 2005-22, in Fig. \ref{fig:incByYearAge}, showing that these trends hold more broadly. 

To better visualize the trend, in Fig. \ref{fig:incByYearAge_55to70} we plot a heatmap of annual AD incidence in the United States over the 55-70 age range in the years 2005-22. As suggested in the previous plots, AD incidence among persons under 70 shows a clear increasing trend in time. While demographic factors surely play a large role, as shown in Fig. \ref{fig:hazardByYear60to70}, this trend may also reflect increased risk. 

\begin{figure}[t!]
    \centering
    \includegraphics[width=0.85\textwidth]{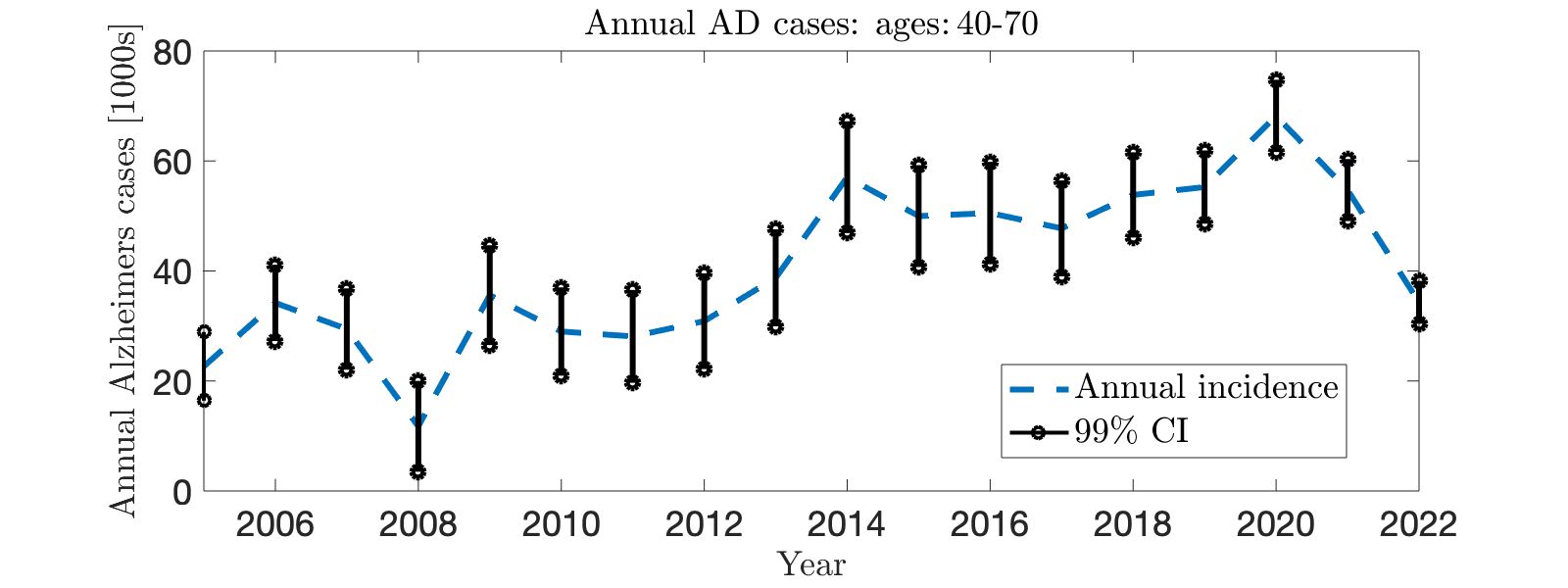}
    \caption{Total AD incidence among persons under 70 in the United States. Our reconstructions indicate a marked increase in incidence in this age group in recent years.}
    \label{fig:totalIncUnder70}
\end{figure}

\begin{table}[t]
\centering
\begin{tabular}{|l||c|c|c|}
\hline
Year & Incidence (1000s) & Lower 99\% CI & Upper 99\% CI \\ \hline \hline
2005 & 22.65 & 13.98 & 31.33 \\ \hline
2006 & 34.20 & 24.56 & 43.83 \\ \hline
2007 & 29.44 & 19.19 & 39.70 \\ \hline
2008 & 11.75 &  0.28 & 23.22 \\ \hline
2009 & 35.59 & 22.99 & 48.19 \\ \hline
2010 & 29.00 & 17.85 & 40.14 \\ \hline
2011 & 28.10 & 16.38 & 39.82 \\ \hline
2012 & 30.87 & 18.78 & 42.96 \\ \hline
2013 & 38.75 & 26.51 & 51.00 \\ \hline
2014 & 57.05 & 42.94 & 71.17 \\ \hline
2015 & 49.98 & 37.11 & 62.84 \\ \hline
2016 & 50.53 & 37.60 & 63.46 \\ \hline
2017 & 47.74 & 35.63 & 59.86 \\ \hline
2018 & 53.82 & 43.06 & 64.58 \\ \hline
2019 & 55.25 & 46.00 & 64.50 \\ \hline
2020 & 68.20 & 59.08 & 77.32 \\ \hline
2021 & 54.71 & 46.87 & 62.56 \\ \hline
2022 & 34.30 & 28.89 & 39.71 \\ \hline
\end{tabular}
\caption{Estimated annual Alzheimer’s incidence among persons under age 70 (in 1000s), with 99\% confidence intervals, United States 2005–2022.}
\label{tab:under70Incidence}
\end{table}

In Table \ref{tab:under70Incidence}, we report annual estimated AD incidence among individuals under age 70, providing further support for the age-specific trends discussed previously, with a corresponding plot provided in Fig. \ref{fig:totalIncUnder70}. Except for 2022—a more recent and likely less reliable estimate—we observe a clear upward trend in AD incidence within this age group. From 2005 to 2013, annual incidence generally ranged between 20,000 and 40,000 cases. In contrast, from 2018 to 2021, incidence levels are consistently in the 50,000–60,000 range. Notably, while overall AD incidence peaked in 2013–2014 (with 2014 standing out as a clear maximum), this pattern is not mirrored in the under-70 population. Instead, our reconstructions indicate that incidence in this younger group remained comparable to, or higher than, 2014 levels throughout the following years. 

\begin{figure}[t!]
    \centering
    \includegraphics[width=0.85\textwidth]{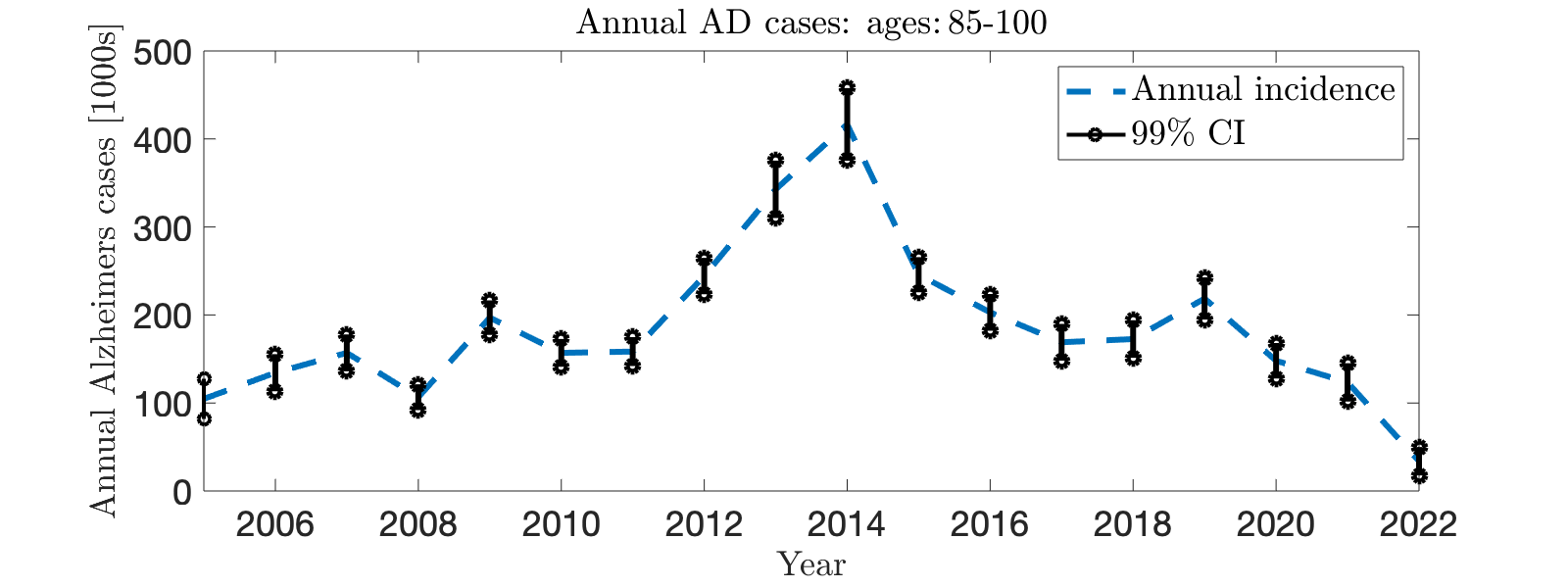}
    \caption{Total AD incidence among persons 85 and older in the United States. Our reconstructions indicate a marked decrease in incidence in this group in recent years, as compared to the mid-2010s peak.}
    \label{fig:totalIncOver85}
\end{figure}

\begin{table}[t]
\centering
\begin{tabular}{|l||c|c|c|}
\hline
Year & Incidence (1000s) & Lower 99\% CI & Upper 99\% CI \\ \hline \hline
2005 & 104.7 & 81.9 & 127.5 \\ \hline
2006 & 134.4 & 113.3 & 155.4 \\ \hline
2007 & 157.0 & 136.0 & 177.9 \\ \hline
2008 & 106.6 & 91.9 & 121.2 \\ \hline
2009 & 197.0 & 177.4 & 216.5 \\ \hline
2010 & 156.9 & 141.1 & 172.8 \\ \hline
2011 & 158.4 & 142.0 & 174.8 \\ \hline
2012 & 244.2 & 223.6 & 264.9 \\ \hline
2013 & 342.7 & 310.1 & 375.4 \\ \hline
2014 & 416.6 & 375.3 & 457.9 \\ \hline
2015 & 245.7 & 225.8 & 265.7 \\ \hline
2016 & 202.9 & 182.1 & 223.8 \\ \hline
2017 & 169.0 & 148.1 & 189.9 \\ \hline
2018 & 172.6 & 151.2 & 194.0 \\ \hline
2019 & 218.7 & 194.7 & 242.7 \\ \hline
2020 & 148.0 & 127.8 & 168.1 \\ \hline
2021 & 123.4 & 101.6 & 145.2 \\ \hline
2022 & 33.8 & 17.7 & 50.0 \\ \hline
\end{tabular}
\caption{Annual Alzheimer's incidence estimates among individuals aged 85 and older, with 99\% confidence intervals (in 1000s), United States 2005–2022.}
\label{tab:over85Incidence}
\end{table}

In Table \ref{tab:over85Incidence}, we report annual estimated AD incidence among individuals aged 85 and older (see also Fig \ref{fig:totalIncOver85}). In contrast to the upward trend observed among those under 70, we find a marked decline in AD incidence among the oldest adults over the past decade. From 2009 through 2014, annual incidence in this group rose sharply, reaching a peak of over 400,000 cases in 2014—substantially higher than any other year. However, following this peak, incidence declined steadily, falling below 250,000 cases by 2015 and dropping to nearly 120,000 by 2021. The estimate for 2022 is substantially lower still, though, as discussed previously, later-year estimates may be less reliable due to right-censoring and data limitations. Notably, this downward trend stands in contrast to the overall trajectory of total AD burden. Furthermore, this cannot be easily explained by demographic trends, as the overall number of persons 85 and older increased approximately 11\% from 2014 to 2022. In summary, these findings suggest that the apparent stability or growth in national AD incidence may be increasingly driven by younger age groups.

\section{Conclusion}\label{sec:conclusions}
In this work, we developed a novel workflow for estimating Alzheimer's disease incidence over time. We first used data assimilation to reconstruct a functional representation of the United States population $u(a,t)$ in both age- and time-domains. Then, using the reconstructed $u(a,t)$ and available age-structured AD mortality data, we developed a back-calculation algorithm to estimate age- and time-dependent AD risk and incidence. Furthermore, we rigorously established the existence and uniqueness of our inverse problem formulation.

\par Our results showed that age-dependent AD risk has remained relatively stable over the study period, particularly among ages 70-85. However, we found evidence of increasing risk among persons under 70, and decreasing risk among persons 85 and older. 

\par While the temporal trends in AD risk observed were quite small, we found evidence of clear temporal trends in AD incidence. Our analysis showed that overall AD incidence peaked in 2013-14, and while we observed lower incidence in the years following, the levels observed 2015-22 are markedly higher than 2005-12 levels. Additionally, AD incidence among persons aged 55-70 has increased sharply in recent years, exceeding even 2013-14 peaks, driven by demographic effects and potentially increasing AD risk.

\par This study has several important limitations. First, our survival function $\gamma$ does not incorporate additional age dependence, due to a lack of sufficiently granular data. In reality, survival following AD onset likely varies with age, not just time since diagnosis, and this simplification may introduce bias in age-structured incidence estimates. Second, our reconstructions rely on death certificate data as a proxy for AD incidence, which is known to under-report true AD burden and may be affected by changes in diagnostic or coding practices over time \cite{brown2024trends, romero2014under,stokes2020estimates}. While we have attempted to adjust for known artifacts—such as excess mortality during the COVID-19 pandemic—these corrections are inevitably approximate. However, we stress that such limitations primarily affect the \textit{quantitative} scale of our estimates (i.e., how many cases are inferred). Hence, even if the absolute level of AD-related mortality is systematically over- or under-counted, this is unlikely to reverse or obscure the major temporal and age-specific trends we observe. Thus, while caution is warranted in interpreting exact incidence counts, the broader, qualitative patterns revealed in our analysis are likely to remain valid across plausible levels of misreporting.

\par There is considerable room for further development of this work. More sophisticated assumptions on the survival function $\gamma$, including the incorporation of age- and/or time-dependence, should be explored. While the current work considered the entire U.S. population, stratification analyses that consider populations grouped by relevant demographic characteristics, including local or regional populations, may provide more detailed information useful for policymakers. Finally, while we have limited the scope of the present analysis to reconstruction and estimation of past incidence, we note that our framework can be used to project future incidence and risk trends as well, following approaches analogous to those shown in \cite{viguerie2025aging}. Such forecasts may be useful for policymakers in public health. Additionally, further investigation of numerical techniques, including more sophisticated, data-driven selection of regularization parameters \cite{Sgattoni2025}, may be worthwhile.

\section*{Acknowledgments}

All the authors are members of the Italian "National Group for Scientific Computation" (GNCS-INDAM), whose support is acknowledged.
The authors would also like to acknowledge Ruiguang Song, Paolo Cerasoli, and Siobhan O'Connor for their helpful input and suggestions.

\bibliographystyle{unsrt}  
\bibliography{references}

\appendix
\section*{Appendix A: Age-Specific AD Risk and Incidence by Year}
\addcontentsline{toc}{section}{Appendix A: Age-Specific AD Risk and Incidence by Year}

\renewcommand{\thefigure}{A\arabic{figure}}
\setcounter{figure}{0}

This appendix displays the estimated annual Alzheimer’s disease (AD) risk and incidence across age for each year from 2005 to 2022. Each pair of pages shows the annual hazard (probability of AD onset) and corresponding incidence (total new AD cases), stratified by age.

\clearpage

\foreach \year in {2005,...,2022} {
    
    \begin{figure}[p]
        \centering
        \begin{minipage}{0.48\textwidth}
            \centering
            \includegraphics[width=\linewidth]{uqByYear/hazard_\year.png}
        \end{minipage}
        \hfill
        \begin{minipage}{0.48\textwidth}
            \centering
            \includegraphics[width=\linewidth]{uqByYear/incidence_\year.png}
        \end{minipage}
        \caption{Age-specific Alzheimer’s disease hazard (left) and incidence (right) with 99\% confidence intervals, year \year.}
    \end{figure}

}

\end{document}